\documentclass[10pt,leqno, psamsfonts]{amsart}

\usepackage{indentfirst,csquotes}

\usepackage{paralist,fancyhdr,etoolbox}
\usepackage{amsmath, amsthm, amscd, amsfonts, amssymb, graphicx, color}
\usepackage[utf8]{inputenc}
\usepackage{bm}
\usepackage[most]{tcolorbox}
\usepackage{xcolor}
\usepackage[colorlinks,citecolor=blue]{hyperref}
\usepackage{orcidlink}
\usepackage{cleveref}
\usepackage{mathrsfs}         
\usepackage{enumitem}         
\usepackage{graphicx}         
\usepackage{tikz}             
\usepackage{nicematrix}
\usepackage{caption}

\usepackage[textsize=normalsize]{todonotes}

\usetikzlibrary{calc, arrows.meta}
\definecolor{arrowcyan}{RGB}{0, 191, 255}
\definecolor{arrowblue}{RGB}{0, 0, 200}
\definecolor{labelgreen}{RGB}{0, 180, 0}

\newtheorem{theorem}{Theorem}[section]
\newtheorem{lemma}[theorem]{Lemma}

\newtheorem{corollary}[theorem]{Corollary}

\newtheorem*{question*}{Question}
\newtheorem*{theorem*}{Theorem}

\theoremstyle{definition}
\newtheorem{definition}[theorem]{Definition}

\newtheorem{remark}[theorem]{Remark}

\newcommand{\abs}[1]{|#1|}
\newcommand{\less}{<}
\newcommand{\more}{>}
\makeatletter
\def\moverlay{\mathpalette\mov@rlay}
\def\mov@rlay#1#2{\leavevmode\vtop{%
   \baselineskip\z@skip \lineskiplimit-\maxdimen
   \ialign{\hfil$\m@th#1##$\hfil\cr#2\crcr}}}
\newcommand{\charfusion}[3][\mathord]{
    #1{\ifx#1\mathop\vphantom{#2}\fi
        \mathpalette\mov@rlay{#2\cr#3}
      }
    \ifx#1\mathop\expandafter\displaylimits\fi}
\makeatother

\newcommand{\cupdot}{\charfusion[\mathbin]{\cup}{\cdot}}

\DeclareMathOperator{\out}{out}
\DeclareMathOperator{\ind}{in}
\DeclareMathOperator{\charg}{charge}
\DeclareMathOperator{\Ind}{IN}
\DeclareMathOperator{\Out}{OUT}

\begin{document}
\title{Cofinality of Regular Tournaments} 

\author[O. Hatem]{Omar Hatem}
\address{Omar Hatem \\ Department of Mathematics and Actuarial Science \\The American University in Cairo\\
 Egypt}
\email{omarhatem2002@aucegypt.edu}
\urladdr{}{}

\author[S. Mohamed]{Sara Mohamed}
\address{Sara Mohamed \\ Department of Mathematics and Actuarial Science \\The American University in Cairo\\
 Egypt}
\email{sara\textunderscore mohamed@aucegypt.edu}

\author[I. M\"uller]{Isabel M\"uller}
\address{Isabel M\"uller \\ Department of Mathematics and Actuarial Science \\
The American University in Cairo \\ Egypt }
\email{isabel.muller@aucegypt.edu}
\urladdr{\href{https://sites.google.com/view/isabelmuller/}{https://sites.google.com/view/isabelmuller/}}

\author[D. Siniora]{Daoud Siniora}
\address{Daoud Siniora \\ Department of Mathematics and Actuarial Science \\The American University in Cairo\\
 Egypt}
\email{daoud.siniora@aucegypt.edu}
\urladdr{\href{https://sites.google.com/view/daoudsiniora/}{https://sites.google.com/view/daoudsiniora/}}

\date{\today}

\maketitle

\let\thefootnote\relax
\footnotetext{MSC2010: Primary: 05C20, Secondary 05C85.} 

\begin{abstract}
We show that the class of all finite regular tournaments is cofinal in the class of finite tournaments. In addition, we establish cofinality results for certain special subclasses of regular tournaments. We also provide an algorithm for constructing these regular tournaments. 
\end{abstract} 

\section{Introduction}

Tournaments are directed graphs with no loops that have exactly one oriented edge between any pair of distinct vertices. A tournament is obtained by choosing one direction for every edge in an undirected complete graph. This paper deals only with finite tournaments. They arise naturally in combinatorics, social choice theory, and the study of dominance relations, where an edge $u \to v$ encodes that $u$ dominates or defeats $v$, as is the case in an actual (sports) tournament where all participants compete against one another. From a structural perspective, tournaments exhibit a remarkably rich theory.

A very fruitful general principle of mathematical research stems from so-called local-global principals, which investigate how 
global structural properties of larger structures emerge from purely local constraints. Classical results illustrate this phenomenon: every tournament contains a directed Hamiltonian path \cite{hamiltionpath}, where Hamiltonianity is often established by studying local configurations. Relatedly, there is Camion's Theorem \cite{camion} which states that if for any partition $A\cup B$ of the vertex set, there is an arrow from $A$ into $B$, then the tournament contains a directed Hamiltonian cycle.  The present paper focuses on regular tournaments, i.e. tournaments in which every vertex has equal indegree and outdegree. These form highly symmetric objects supporting strong cycle structure. For instance, they contain cycles of any length \cite{alspach}. Moreover, tournaments appear naturally in connection with algebraic objects: the automorphism group of a tournament has odd order, and conversely, every finite group of odd order arises as the automorphism group of some tournament \cite{moon}, highlighting their expressive combinatorial power.

Among all tournaments, regular tournaments occupy a  special role. They may be viewed as the “balanced” objects in the class: every vertex has the same number of wins and losses, or equivalently is of zero charge. This balancing condition forces strong global uniformity and symmetry, while still allowing a large diversity of configurations. In particular, regular tournaments exist only in odd orders, and their score sequence is completely uniform, in contrast to arbitrary score sequences permitted by general tournaments, which are classified by Landau's Theorem \cite{landau}. From a structural standpoint, it is therefore natural to ask the following question.

\begin{question*}
   How frequently do regular tournaments appear in the class of all tournaments?
\end{question*}
The main result of this paper answers shows that regular tournaments are rather ubiquitous in the class of tournaments.

\begin{theorem*}
    Every finite tournament embeds into a finite regular tournament. 
\end{theorem*}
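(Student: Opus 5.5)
Given a tournament $T$ on $n$ vertices, I will embed it into a regular tournament on $N = 2m+1$ vertices by adding a set $S$ of $N-n$ new vertices. I will choose the arcs between $T$ and $S$, and inside $S$, so that every vertex ends with outdegree exactly $m$. Write the \emph{charge} of $v$ as $\out(v)-\ind(v)$; regularity means every charge is zero. The plan has two stages: first fix the arcs from $T$ to $S$ so that every old vertex has charge $0$, then fill in $S$ so that its vertices also become balanced.

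\textbf{Stage 1 (balancing old vertices).} Take $|S| = k$ large and of the right parity, so that $n+k$ is odd; for instance $k \ge 2n$ with $k \equiv n+1 \pmod 2$. A vertex $v\in T$ has charge $c_v = \out_T(v)-\ind_T(v)$, which satisfies $|c_v|\le n-1$ and $c_v \equiv n-1 \pmod 2$. I need $v$ to have $a_v$ out-arcs into $S$ and $k-a_v$ in-arcs from $S$ with $c_v + 2a_v - k = 0$, that is, $a_v = (k-c_v)/2$. The parity condition makes this an integer, and $k\ge 2n$ gives $0\le a_v\le k$. For the new vertices, the natural choice is to distribute these arcs so that each $s\in S$ receives nearly the same number of in-arcs from $T$. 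Concretely, I will assign the $a_v$ targets of each $v$ cyclically around $S$ (a round-robin filling), so that the in-degrees from $T$ of any two vertices of $S$ differ by at most $1$. Consequently the charges that the $s\in S$ receive from $T$ differ by at most $2$. Their sum is $-\sum_v c_v = 0$, since the charges of $T$ sum to zero. So each $s$ carries charge $d_s\in\{-1,0,1\}$ or $\{0,\pm 2\}$-type values, all small with total $0$; the parity of $d_s$ is $n \bmod 2$ for every $s$.

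\textbf{Stage 2 (tournament on $S$ with prescribed charges).} Now I need a tournament on $S$ in which each $s$ has internal charge $-d_s$. Equivalently, $s$ needs internal outdegree $(k-1-d_s)/2$, which is an integer by the parity bookkeeping, since $k-1\equiv n \pmod 2$. These target scores sum to $\binom{k}{2}$ and are all within $1$ of $(k-1)/2$. By Landau's theorem \cite{landau}, a near-regular score sequence of this kind, with total $\binom{k}{2}$ and all entries within $1$ of the mean, is realizable: the partial sums of the smallest $j$ scores are at least $j(k-1)/2 - \min(j,k-j) \ge \binom{j}{2}$ once $k$ is large. Alternatively, I can avoid Landau by starting from a regular or near-regular rotational tournament on $S$ and reversing arcs along paths between surplus and deficit vertices. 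Combining the two stages gives every vertex charge $0$, so the resulting tournament is regular and contains $T$.

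\textbf{Main obstacle.} The delicate step is the bookkeeping in Stage 1: the rounding of the round-robin distribution has to produce charges $d_s$ whose spread is small enough, and whose parities are consistent, for Stage 2 to go through. If the cyclic assignment leaves spread $2$ in charge, I will verify Landau's inequalities directly. If the parity fails for odd $k$ against odd $n$, I will adjust $k$ by $2$.
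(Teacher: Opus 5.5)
Your proof is correct, and it takes a genuinely different route from the paper's.

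\textbf{The paper's route.} It argues by induction on the order. It deletes a vertex $v$ and embeds $T\setminus\{v\}$ into a large enough regular tournament $R$, using the induction hypothesis and the two-vertex extension lemma. It then reinserts $v$ together with a ``mirror'' vertex $u$, cancels $\charg(v)$ against a block $B\subseteq R\setminus T'$, and closes everything up with directed triangles and squares. This needs nothing beyond explicit gadgets and fits the paper's amalgamation viewpoint. However, the construction is recursive, and the final order has to be tracked through the induction.

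\textbf{Your route.} You add all new vertices at once and balance every old vertex simultaneously via $a_v=(k-c_v)/2$. All remaining work is pushed into finding a tournament on $S$ with a prescribed score sequence. This gives a one-shot construction.

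\textbf{Your closing hedges are unnecessary.} Since $k\equiv n+1 \pmod 2$ is fixed in advance, the round-robin pins down the $d_s$ exactly. When $n$ is even, every $s\in S$ gets exactly $n/2$ in-neighbours in $T$, so all $d_s=0$ and any regular tournament on the odd set $S$ works. When $n$ is odd, exactly $k/2$ vertices have $d_s=1$ and $k/2$ have $d_s=-1$. Deleting one vertex $x$ from a regular tournament on $k+1$ vertices produces exactly this score pattern, with $\Ind(x)$ and $\Out(x)$ as the two classes. Your Landau estimate is valid for $k\ge 4$, but it is not needed.

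\textbf{A sharper bound.} Nothing in your argument requires $k\ge 2n$. Taking $k=n-1$ satisfies the parity condition and gives $a_v=\ind_T(v)\in[0,k]$. So your construction embeds every tournament of order $n$ into a regular tournament of order $2n-1$. This is best possible: in a regular tournament of order $N$ containing the transitive tournament of order $n$, the source of the latter has outdegree $(N-1)/2\ge n-1$.
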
 

This means that regular tournaments form a cofinal subclass in the class of tournaments, which implies that there is no loss of generality by restricting ones attention to them when studying hereditary properties. Any local imbalance present in a tournament can be absorbed by extending the structure while restoring global balance.

We further refine this phenomenon by identifying two natural subclasses of regular tournaments, each defined by a symmetry condition at a distinguished vertex. Let $v$ be a vertex and denote by $\Ind(v)$ and $\Out(v)$ its in-neighbourhood and out-neighbourhood, respectively. We say that a regular tournament is of \textit{Type-I} if there exists some vertex $v$ such that the induced subtournament on $\Ind(v)$ is isomorphic to the one induced on $\Out(v)$. We say that a regular tournament is of \textit{Type-II} if there exists some $v$ such that the induced subtournament on $\Ind(v)$ is isomorphic to the inversion of that on $\Out(v)$.

These conditions can be interpreted as local symmetry principles: in Type-I, the incoming and outgoing subtournaments of some vertex are structurally identical, while in Type-II they are dual to each other. Our second main result shows that these symmetry constraints still preserve universality.

\begin{theorem*}
    Both Type-I and Type-II regular tournaments are cofinal in the class of all finite tournaments.
\end{theorem*}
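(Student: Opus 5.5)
The plan is to reduce both statements to the first main theorem by a single \enquote{doubling} construction. Let $T$ be a finite tournament. By the first theorem, $T$ embeds into a finite regular tournament $R$, say on $m=2k+1$ vertices, so every vertex of $R$ has in- and outdegree $k$. From $R$ I will build a regular tournament $S$ on $2m+1$ vertices, with vertex set $\{v\}\cup A\cup B$, such that:
\begin{itemize}
\item $A$ induces a copy of $R$, and $\Ind(v)=A$;
\item $B=\{b' : b\in A\}$, and $\Out(v)=B$;
\item $B$ induces either a copy of $R$ (for Type-I) or the inversion of $R$ (for Type-II).
\end{itemize}
Since $T\le R\cong S[A]$, this gives the embedding of $T$, and the condition on $B$ is exactly the Type-I, respectively Type-II, requirement at $v$. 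It remains to specify the arrows between $A$ and $B$ and to check regularity, i.e. that every vertex of $S$ has outdegree $m$.

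The arrows between $A$ and $B$ are defined by the following rule, used in both cases:
\begin{itemize}
\item for $a\neq b$ in $A$, put $a\to b'$ if $b\to a$ in $R$, and $b'\to a$ otherwise, i.e.\ when $a\to b$;
\item for each $a\in A$, put $a'\to a$.
\end{itemize}
This orients every pair exactly once, so $S$ is a tournament.

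Next I check all outdegrees, using only the fact that $R$ (and hence its inversion) is $k$-regular.
\begin{itemize}
\item The vertex $v$ has outdegree $|B|=m$.
\item A vertex $a\in A$ has $k$ out-neighbours inside $A$. It has none towards $v$, since $a\to v$. Towards $B$, its out-neighbours are the $b'$ with $b\to a$, and there are $\operatorname{in}_R(a)=k$ of them. The total is $2k=m-1$. Together with the in-arrow from $v$ this is consistent: $a$ has indegree $k+k+1=m$ (namely $a'$, the $b'$ with $a\to b$, and $v$), so $a$ is balanced.
\item A vertex $b'\in B$ has $k$ out-neighbours inside $B$, whether $B$ is a copy of $R$ or of its inversion, since inversion of a regular tournament is regular. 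Towards $A$, it dominates $b$ and all $a$ with $a\to b$, which gives $k+1$ vertices. The total is $2k+1=m$.
\end{itemize}
Hence $S$ is regular, and the in- and out-neighbourhoods of $v$ have the required shape.

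The only real obstacle is the choice of arrows between $A$ and $B$. If one starts directly from $T$ instead of from a regular $R$, the degree counts need out-degrees in $A$ and in $B$ to be complementary in a way that fails for general $T$: a vertex of outdegree $d$ in $A$ would need $m-1-d$ out-neighbours in $B$, while its copy needs $d+1$ out-neighbours in $A$, and the natural rules give $d$ or $m-1-d$ but not both. This is why I first pass to $R$ via the first theorem. Regularity of $R$ makes the in- and out-degrees coincide, so the single rule above works simultaneously for Type-I and Type-II.
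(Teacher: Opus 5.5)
Your construction is correct, but the degree check for $a\in A$ is wrong as written. Since $\Ind(v)=A$, you have $a\to v$, so $v$ \emph{is} an out-neighbour of $a$ and there is no arrow $v\to a$. The correct count gives $k+1+k=m$ out-neighbours: $k$ inside $A$, the vertex $v$, and the $k$ vertices $b'$ with $b\to a$. It gives $k+1+k=m$ in-neighbours: $k$ inside $A$, the vertex $a'$, and the $k$ vertices $b'$ with $a\to b$. Your stated outdegree $2k=m-1$ would contradict regularity, so fix this line; nothing else in the argument changes.

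For Type-I, your orientation ($A\to v\to B$, $a'\to a$, and $a\to b'$ iff $b\to a$) is exactly the paper's construction. The paper, however, applies it directly to $T$ of order $n$. With $B\cong T$, a vertex of outdegree $d$ in $A$ gets $d+1+(n-1-d)=n$ out-neighbours, and so does its copy, so regularity of the input is never used. Your closing claim that the direct approach fails is therefore true only for Type-II. There the copy would get $(n-1-d)+1+(n-1-d)$ out-neighbours, which equals $n$ only when $d=(n-1)/2$.

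For Type-II, your route genuinely differs from the paper's. The paper also works with $T$ itself: it fixes $w\to V$ and $U\to w$ with $U\cong V^*$. It then reduces the cross arrows to a $\{0,1\}$-matrix $C$ with row and column sums $n-\out(v_i)$. It obtains $C$ from Gale--Ryser after verifying condition $(\ddagger)$ by a Griggs--Reid induction on score sequences. You bypass all of this by first invoking the regular-cofinality theorem; the same doubling then works with $B\cong R^*$ because $\ind_R=\out_R=k$.

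Your route is much shorter and treats both types uniformly. The price is that it depends on the first main theorem and yields a tournament of order $2|R|+1$. The paper's argument is independent of that theorem and places $T$ as the full out-neighbourhood of the distinguished vertex in a Type-II tournament of order $2n+1$.
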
  

This is somewhat surprising: even after imposing strong local symmetry at a vertex, one still retains enough flexibility to embed arbitrary tournaments. From a broader perspective, these results contribute to a general paradigm in combinatorics and model-theoretic graph theory: identifying highly structured subclasses that remain universal for embeddings. Such classes often serve as canonical environments in which arbitrary configurations can be studied while benefiting from additional symmetry and regularity. In particular, cofinal subclasses are natural candidates for generic or universal constructions, and may be useful in probabilistic, extremal, or model-theoretic analysis of tournaments.

Finally, we complement our existence results with explicit constructions, including an inductive procedure for building regular extensions and a matrix-based construction via Gale–Ryser \cite{gale, ryser} for the Type-II case. We also propose a greedy algorithm for constructing Type-II regular tournaments and verify it computationally for small sizes.


\section{Preliminaries}
\begin{definition}
A \textit{tournament} $T$ is a pair $(V,E)$ of sets where $E\subseteq V\times V$ such that $E$ is an irreflexive, antisymmetric, and total binary relation on $V$. 
\end{definition}

We will use $T$ and $V$ interchangeably. Observe that $(V,E)$ is a tournament if and only if for any two distinct vertices $u,v\in V$, either $(u,v)\in E$ or $(v,u)\in E$, but not both. As usual, members of $V$ are called \textit{vertices}, and members of $E$ are called \textit{directed edges} or \textit{arrows}. The number of vertices is the \textit{order} of the tournament. We write $u\to v$ when $(u,v)\in E$.  

Given a vertex $v\in V$, we define the subsets $\Ind(v):=\{u\in V\mid u\to v\}$ and $\Out(v):=\{u\in V\mid v\to u\}$. The \textit{indegree of $v$}, denoted by $\ind(v)$, is the number of incoming edges to $v$, whence clearly, $\ind(v)=\abs{\Ind(v)}$. In the same vein, the \textit{outdegree of $v$}, denoted by $\out(v)$, is the number of outgoing edges of $v$, and so $\out(v)=\abs{\Out(v)}$. Clearly, $\ind(v)+\out(v)=|V|-1$ for all vertices $v\in V$. 

The \textit{score sequence} of a tournament is defined to be the sequence of the outdegrees of all vertices, written in nondecreasing order. More precisely, it is the sequence $(s_1,s_2,\ldots,s_n)$, where $V=\{v_1,\ldots,v_n\} $, $s_i=\out(v_i)$, and $s_1\leq s_2\leq \cdots \leq s_n$. A tournament $(V,E)$ where the edge relation $E$ is also transitive is called a \textit{transitive tournament}. The score sequence of a transitive tournament of order $n$ is $(0,1,2,\ldots,n-1)$.

The \textit{charge} of a vertex $v$ is defined to be the difference between its indegree and outdegree, that is, $\charg(v)= \ind(v)-\out(v).$ 
When $\charg(v)=0$, we say $v$ has a \textit{neutral charge}.

\begin{remark}\label{rem:Teveniffvodd}
   For any vertex $v$ in a tournament $T$ we have that $$\charg(v)= \ind(v)-\out(v)=\ind(v)+\out(v)-2\out(v)=|V|-(2\out(v)+1).$$
In particular,  the number of vertices in a tournament is even if and only if some (equivalently, every) vertex has odd charge. 
\end{remark}

Our main focus is to construct and classify regular tournaments.

\begin{definition}
A tournament is called \textit{regular} if $\ind(v) = \out(v)$ for every vertex $v$.
\end{definition} 
A minimal tournament on more than one vertex is given by a directed triangle. In fact, we will often use these to balance out vertices with unassigned edges between them. A regular tournament can be defined in many analogous ways. In particular, note that the following are equivalent for any tournament $T=(V,E)$ on $n$ vertices.
\begin{enumerate}[label=(\roman*)]
    \item $T$ is regular.
    \item For any vertices $u,v\in V$, $\out(u)=\out(v)$.
    \item For any vertices $u,v\in V$, $\ind(u)=\ind(v)$.
    \item For any vertex $v\in V$, $\ind(v) =\frac{1}{2}(n-1)$.
    \item For any vertex $v\in V$, $\out(v) =\frac{1}{2}(n-1)$.
    \item For any vertex $v\in V$, $\charg(v)=0$.
\end{enumerate}

In particular, as then $\ind(v)+\out(v)=2\ind(v)=|V|-1$, any regular tournament is of odd order.

\section{Cofinality of Regular Tournaments}
Our first goal is to prove that any tournament can be embedded into a regular tournament. To this end, we first show that we can enlarge regular tournaments while preserving regularity.  
\begin{lemma}
Any regular tournament of order $n$ embeds in a regular tournament of order $n+2$.
\end{lemma}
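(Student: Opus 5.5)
Let $T=(V,E)$ be regular of order $n=2k+1$, so every vertex has indegree and outdegree $k$. I will add two new vertices $a$ and $b$, keep all arrows of $T$, and choose the new arrows so that every vertex ends with indegree and outdegree $k+1$. The plan is to split $V$ into two halves of size $k$ and $k+1$ and send each half to one of the new vertices.

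First, partition $V=A\cupdot B$ with $\abs{A}=k$ and $\abs{B}=k+1$; any partition of these sizes will do. Then declare
\begin{itemize}
\item $a\to u$ and $u\to b$ for every $u\in A$,
\item $b\to w$ and $w\to a$ for every $w\in B$,
\item $a\to b$.
\end{itemize}

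Next, check the degrees of the old vertices. Each $u\in A$ receives exactly one new in-arrow (from $a$) and one new out-arrow (to $b$). Each $w\in B$ receives one new in-arrow (from $b$) and one new out-arrow (to $a$). So every old vertex has indegree $k+1$ and outdegree $k+1$.

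Then check the two new vertices.
\begin{itemize}
\item $a$ has out-neighbourhood $A\cup\{b\}$, of size $k+1$, and in-neighbourhood $B$, of size $k+1$.
\item $b$ has out-neighbourhood $B$, of size $k+1$, and in-neighbourhood $A\cup\{a\}$, of size $k+1$.
\end{itemize}

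Hence the tournament on $V\cup\{a,b\}$ has $2k+3=n+2$ vertices, and every vertex has charge zero, so it is regular. Since we added no arrows inside $V$ and changed none, $T$ is an induced subtournament, which gives the embedding.

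There is no real obstacle here. The only point that needs care is choosing the split sizes $k$ and $k+1$ so that the arrow between $a$ and $b$ balances the asymmetry. The degenerate case $n=1$ works: then $A=\emptyset$, $B=V$, and the result is a directed triangle.
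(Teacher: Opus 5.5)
Your construction is correct and is the paper's own, up to swapping the names $a$ and $b$. The paper fixes $v$, splits $V\setminus\{v\}$ into $X,Y$ of size $k$, and adds the directed squares $a\to x\to b\to y\to a$ together with the directed triangle $a\to v\to b\to a$; its $X\cup\{v\}$ and $Y$ play the roles of your $B$ (size $k+1$) and $A$ (size $k$), so your $k$/$(k+1)$ split balanced by the single $a$--$b$ arrow yields the same tournament.
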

\begin{proof} Let $T=(V,E)$ be a regular tournament of order $n$. We know that $n$ must be odd, say $n=2k+1$ for some integer $k$. Fix some vertex $v$ and partition the remaining vertices arbitrarily into two sets of equal cardinality, i.e. such that $V=X\cupdot Y \cupdot \{v\}$ where $|X|=k=|Y|$. We now add two new vertices $a$ and $b$ to $T$ and then complete $T\cup \{a,b\}$ into a regular tournament in the following way: For any $x\in X$ and $y\in Y$, complete $\{a,x,b,y\}$ into a directed square by adding the arrows $a\to x, x\to b, b\to y$ and $y\to a$. Further, complete $\{a,b,v\}$ into a directed triangle by adding $a\to v$, $v\to b$, and $b\to a$.

\begin{center}
 \begin{tikzpicture}[scale=0.75]
    \draw (0.3,0) rectangle (2.5,2);
    \node at (0.8, 0.3) {$X$};
    
    \draw (2.5,0) rectangle (4,2);
    \node[circle, fill, inner sep=1.5pt, label={left:$v$}] (v) at (3.3, 1) {};
    
    \draw (4,0) rectangle (6.2,2);
    \node at (5.8,0.3) {$Y$};

    \node[circle, fill, inner sep=1.5pt, label={above:$a$}] (a) at (4, 3) {};
    \node[circle, fill, inner sep=1.5pt, label={below:$b$}] (b) at (4, -1.2) {};
    
    \node[circle, fill, inner sep=1.2pt, label={above:$x$}] (x) at (1.4, 1) {};
    \node[circle, fill, inner sep=1.2pt, label={right:$y$}] (y) at (5, 1) {};
    \draw[arrowblue, -{Stealth[scale=1.2]}, ultra thick] (a) -- (v);
    \draw[arrowblue, -{Stealth[scale=1.2]}, ultra thick] (v) -- (b);
    
    \draw[arrowblue, -{Stealth[scale=1.2]}, ultra thick] 
        (b.east) to[out=0, in=0, looseness=2.2] (a.east);

    \draw[arrowcyan, -{Stealth[scale=1.2]}, ultra thick] (a) -- (x);
    \draw[arrowcyan, -{Stealth[scale=1.2]}, ultra thick] (x) -- (b);
    \draw[arrowcyan, -{Stealth[scale=1.2]}, ultra thick] (b) -- (y);
    \draw[arrowcyan, -{Stealth[scale=1.2]}, ultra thick] (y) -- (a);
\end{tikzpicture}
\end{center}





This defines a regular tournament on $V\cup \{a,b\}$ where every vertex has indegree and outdegree equal to $k+1$.
\end{proof}

\begin{corollary}\label{largeregulartournaments}
Let $T$ be a regular tournament of order $n$. For any odd integer $m\geq n$, there is a regular tournament of order $m$ that contains $T$ as a subtournament. 
\end{corollary}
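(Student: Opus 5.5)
The corollary follows by induction from the preceding lemma, which enlarges a regular tournament of order $n$ to one of order $n+2$. Since $T$ is regular, its order $n$ is odd. Because $m$ is also odd and $m\geq n$, we can write $m=n+2j$ for some integer $j\geq 0$. We then induct on $j$.

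For the base case $j=0$, take $T$ itself; every tournament is a subtournament of itself.

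For the inductive step, assume we have a regular tournament $T_j$ of order $n+2j$ containing $T$ as a subtournament. Apply the preceding lemma to $T_j$. This yields a regular tournament $T_{j+1}$ of order $n+2j+2$ into which $T_j$ embeds. The lemma's construction only adds the two vertices $a,b$ and the arrows incident to them, and never alters arrows inside $V$. So $T_j$ is an induced subtournament of $T_{j+1}$.

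It remains to note that "is a subtournament of" is transitive. An induced subtournament of an induced subtournament is an induced subtournament, and a composition of embeddings is an embedding. Hence $T$ is a subtournament of $T_{j+1}$. After $j$ steps we reach a regular tournament of order $m$ containing $T$.

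There is no real obstacle. The only points to check are the parity observation, which guarantees that $m-n$ is even so that steps of size $2$ land exactly on $m$, and the transitivity of embeddings.
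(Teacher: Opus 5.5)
Your proof is correct and is the intended argument. The paper states this corollary without proof as an immediate consequence of applying the preceding lemma repeatedly, and your parity observation (that $m-n$ is even) and your appeal to transitivity of embeddings are exactly the details it leaves implicit.
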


\begin{theorem}
Every tournament is a subtournament of a regular tournament. 
\end{theorem}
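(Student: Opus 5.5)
We argue by strong induction on the total imbalance $\sum_{v\in V}\abs{\charg(v)}$ of the tournament $T=(V,E)$; once a regular tournament containing $T$ is reached we are done. It is convenient to add vertices that neutralise charges directly. We first reduce to odd order: if $\abs{V}$ is even, we add one new vertex $w$ and orient its edges arbitrarily. The result contains $T$, has odd order, and by Remark~\ref{rem:Teveniffvodd} every vertex then has even charge.

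So assume $n=\abs{V}$ is odd, so that every charge is even. Let $P=\{v:\charg(v)>0\}$ (more losses than wins) and $N=\{v:\charg(v)<0\}$. Since $\sum_v \charg(v)=\sum_v(\ind(v)-\out(v))=0$, we have
\[
\sum_{v\in P}\charg(v)=\sum_{v\in N}\abs{\charg(v)}.
\]
We now add a pair of new vertices $a,b$ with $b\to a$. For each old vertex we choose one of four patterns: $a\to x\to b$, $b\to x\to a$, $x\to a,\ x\to b$, or $a\to x,\ b\to x$.
\begin{itemize}
\item The first two patterns leave $\charg(x)$ unchanged.
\item The pattern $x\to a,\ x\to b$ lowers $\charg(x)$ by $2$.
\item The pattern $a\to x,\ b\to x$ raises $\charg(x)$ by $2$.
\end{itemize}
Choose one vertex $p\in P$ and one vertex $q\in N$; such a pair exists unless $T$ is already regular. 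Give $p$ two outgoing edges to $a,b$, and give $q$ two incoming edges from $a,b$. The remaining old vertices are to receive the neutral patterns $a\to x\to b$ and $b\to x\to a$.

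We now check $a$ and $b$. Suppose $r$ old vertices get $a\to x\to b$ and $s$ get $b\to x\to a$, with $r+s=n-2$. Then
\[
\out(a)=r+1,\qquad \ind(a)=s+1+1=s+2,
\]
where the extra $1$ in $\ind(a)$ comes from $b\to a$. Similarly
\[
\out(b)=s+1+1=s+2,\qquad \ind(b)=r+1.
\]
Balancing $a$ requires $r+1=s+2$, that is $r=s+1$. Since $r+s=n-2$ is odd, this is solvable with $r=(n-1)/2$ and $s=(n-3)/2$. With this choice both $a$ and $b$ get charge $0$. The order rises from $n$ to $n+2$ and stays odd.

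The total imbalance strictly decreases, by $4$: $\abs{\charg(p)}$ and $\abs{\charg(q)}$ each drop by $2$, every other old vertex is unchanged, and the new vertices contribute $0$. By induction we reach a regular tournament containing $T$. Corollary~\ref{largeregulartournaments} is not needed here, but it yields regular supertournaments of any larger odd order.

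The main obstacle is the bookkeeping at the new vertices: they must be neutral, which requires the split $r=s+1$, and this split needs the parity of $n$. That is exactly why we first pass to odd order. The other check is that $P$ and $N$ are both nonempty whenever $T$ is not regular, which follows from the zero-sum identity above.
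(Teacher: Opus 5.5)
Your proof is correct, and it takes a genuinely different route from the paper.

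\textbf{The paper's route.} The paper inducts on the order. It deletes a vertex $v$ with $\charg(v)=m>0$ and embeds $T'=T\setminus\{v\}$ into a regular $R$ with at least $m$ vertices outside $T'$, using the induction hypothesis and Corollary~\ref{largeregulartournaments}. It then amalgamates $T$ and $R$ over $T'$ and sends $v$ to a block $B$ of $m$ of those extra vertices. A mirror vertex $u$, whose arrows to $T'\cup B$ cancel the resulting imbalances, is added next. Finally it closes up with a directed triangle on $\{v,s_0,u\}$ and directed squares $v\to y\to u\to x\to v$.

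\textbf{Your route.} You never delete anything. You treat $\sum_v\abs{\charg(v)}$ as a potential and lower it by $4$ per step with a two-vertex gadget. Your gadget is exactly the gadget from the lemma preceding Corollary~\ref{largeregulartournaments} (new vertices $a,b$ with $b\to a$, every old vertex on a path $a\to x\to b$ or $b\to x\to a$, one more of the former kind). You apply it to an arbitrary odd-order tournament and then reverse the directed path $q\to a\to p$. This keeps $a$ neutral and moves $\charg(p)$ and $\charg(q)$ two steps toward $0$.

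\textbf{What each approach buys.} Your route is short and local. It needs no amalgamation, no spare block $B$ and no enlargement corollary, and it yields an explicit procedure whose output has order exactly $n+\frac12\sum_v\abs{\charg(v)}$ for odd $n$. The cost is size. For the transitive tournament of odd order $n$ your output has order $n+\frac{n^2-1}{4}$. The paper's recursion, with $R$ chosen as small as allowed, never exceeds $2\abs{T}+1$, and the doubling construction in the Type-I lemma reaches $2\abs{T}+1$ in one step.

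\textbf{Recovering a linear bound.} You can get a linear bound inside your framework by letting one pair $a,b$ correct $k$ vertices of $P$ and $k$ vertices of $N$ simultaneously; the balance condition at $a$ and $b$ is still $r=s+1$. In each round, take all vertices of maximal $\abs{\charg}$ on both sides, padded to equal numbers; the zero-sum identity guarantees enough vertices for the padding. Then $\max_v\abs{\charg(v)}$ drops by $2$ per round, so the order is at most $2n-1$.

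\textbf{Two small points of hygiene.} First, state the induction for odd-order tournaments only, since the preliminary vertex $w$ may increase the imbalance. Second, $s=\frac{n-3}{2}\ge 0$ uses $n\ge 3$, which holds because a one-vertex tournament is already regular.
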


\begin{proof}
We will prove the theorem by induction. The base case is clear. Fix some positive integer $n$ and assume that any tournament of order $n$ embeds into a regular tournament. Now, pick any tournament $T$ of order $n+1$ and fix some vertex $v$ of $T$. Let $m=\charg(v)$. Without loss of generality, assume $m\more 0$.

Let $T':=T\setminus \{v\}$ be the tournament obtained from $T$ by deleting the vertex $v$ and all arrows incident with it. By induction hypothesis and Corollary \ref{largeregulartournaments}, there exists a regular tournament $R$ of order at least $n+m$ containing $T'$ as a subtournament. Let $S:=R\setminus T'$ and notice that $S$ has at least $m$ vertices. 

Since $R$ is a regular tournament, we know it is of odd order, whence the following statements are equivalent: 
 \begin{itemize}[itemsep=2pt]
  \item [(1)] $S$ is of even order;
  \item [(2)] $T'$ is of odd order;
  \item [(3)] $T$ is of even order;
  \item [(4)] $m=\charg(v)$ is odd. 
 \end{itemize}

In particular, $\abs{S}-m$ is always odd.


 Fix a subset $B\subseteq S$ of $m$ many arbitrary vertices in $S$, say $B:=\{b_1, b_2,\ldots, b_m\}$, which we will use to balance the charge of $v$. By the above, the difference $S\setminus B$ is of odd size. Thus, after fixing an arbitrary vertex  $s_0\in S$, we can partition the vertices of $S\setminus (B\cup \{s_0\})$ into two sets $X$ and $Y$ of equal size, i.e.  
 $$S=B\cup X\cup Y\cup \{s_0\},$$ 

with $\abs{X}=\abs{Y}$. We thus have the two tournaments below, where the charge is denoted in green and $T'=\Ind(v)\cup \Out(v)$.

\begin{center}
\begin{minipage}[b]{0.4\textwidth}
\centering
\begin{tikzpicture}[scale = 0.75]
    \draw[thick] (0,0) -- (4,0) -- (4,5) -- (0,5) -- cycle;
    
    \draw[thick] (0,3.5) -- (4,3.5);
    
    \draw[thick] (2,0) -- (2,3.5);

    \node at (2, 5.4) { $T$};
    \node[circle, fill, inner sep=1.5pt, label={right:$v$ \textcolor{labelgreen}{$(m)$}}] (v) at (2, 4.3) {};
    
    \node at (1, 1.4) { $\Ind(v)$};
    \node at (3, 1.4) { $\Out(v)$};

    \draw[cyan, thick, ->] (0.5, 2) -- (v);
    \draw[cyan, thick, ->] (v) -- (3.5, 2);
\end{tikzpicture}

 Original tournament $T$.
\end{minipage}
    \hfill 
    \begin{minipage}[b]{0.55\textwidth}
        \centering
\begin{tikzpicture}[scale = 0.75]
    \def\rectH{5} 
    \def\rectW{6} 
    \def\midV{2.5}  
    \def\rowH{0.6} 

    \draw[thick] (0,0) rectangle (\rectW,\rectH);
    
    \draw[thick] (2.5,0) -- (2.5,\rectH);
    
    \draw[thick] (2.5, \midV - \rowH) -- (\rectW, \midV - \rowH); 
    \draw[thick] (2.5, \midV + \rowH) -- (\rectW, \midV + \rowH); 
    
    \draw[thick] (3.8, \midV - \rowH) -- (3.8, \midV + \rowH);

    \node at (1.2, \midV) {$T'$};
    \node[circle, fill, inner sep=1.2pt, label={above:$v$}] at (-1, 3.5) {};
    \node at (4, 4) {$X$};
    \node[circle, fill, inner sep=1.2pt, label={right:$s_0$}] at (2.8, \midV) {};
    \node at (4.8, \midV) {$B$};
    \node at (4, 1.2) {$Y$};
    
    \node[right] at (\rectW + 0.2, \midV) {$R$};
    \node[above] at (4.3, \rectH) {$S$};
\end{tikzpicture}

 Regular tournament $R$ containing $T'$ with $\abs{B}=m$.
    \end{minipage}
\end{center}


Our goal is, to amalgamate the two tournaments $T$ and $R$ over $T'$, and with the help of a new vertex $u$, complete this into a regular tournament. First note that in the free amalgam of $T$ and $R$ over $T'$, the charges are given as follows:
\begin{center}
    \begin{tikzpicture}[scale = 0.75]

    \draw (0,0) rectangle (5,6);
    \draw (0,4.8) -- (5,4.8); 
    \draw (2.5,0) -- (2.5,4.8); 

    \node at (2.5, 6.4) { $T$};
    \node at (2.5, 5.4) { $v$ \textcolor{labelgreen}{(m)}};
    
    \node at (1.25, 2.5) { \textcolor{labelgreen}{(-1)}};
    \node at (1.25, 1.5) { $\Ind(v)$};
    
    \node at (3.75, 2.5) { \textcolor{labelgreen}{(+1)}};
    \node at (3.75, 1.5) { $\Out(v)$};

    \draw[lightgray, -{Stealth[scale=1.2]}, very thick] (1.25, 3.5) -- (2.0, 5.0);
    \draw[lightgray, -{Stealth[scale=1.2]}, very thick] (3, 5.0) -- (3.75, 3.5);

    \draw (5,0) rectangle (10,4.8);
    
    \node at (7.5, 5.2) {$S$};
    \node[right] at (10, 2.4) {$R$};

    \draw (5, 1.9) -- (10, 1.9); 
    \draw (5, 2.9) -- (10, 2.9); 
    \draw (6.5, 1.9) -- (6.5, 2.9); 

    \node at (7.5, 3.85) { $X$ \textcolor{labelgreen}{(0)}};
    
    \node at (5.75, 2.4) {\normalsize $s_0$ \textcolor{labelgreen}{(0)}};
    \node at (8.25, 2.4) { $B$ \textcolor{labelgreen}{(0)}};
    
    \node at (7.5, 0.95) { $Y$ \textcolor{labelgreen}{(0)}};

\end{tikzpicture}
   
   Amalgam over $T'$, no edges between $v$ and $S$.
\end{center}

 
 First, we neutralize the charge of $v$ with help of our balancing vertices $B$. Recall that we assumed $\charg(v):=m$ in $T$ to be positive, whence we add an outgoing edge $v\to b$ for every $b\in B$. In the resulting graph, $v$ will have neutral charge then, while $\charg(b)=1$ for any $b\in B$. Furthermore, to produce a tournament, we need to introduce edges between $v$ and the remaining vertices in $X\cup Y\cup \{s_0\}$. 

\begin{center}
    \begin{tikzpicture}[scale = 0.75]

    \draw (0,0) rectangle (5,6);
    \draw (0,4.8) -- (5,4.8); 
    \draw (2.5,0) -- (2.5,4.8); 

    \node at (2.5, 6.4) { $T$};
    \node (Vheader) at (2.5, 5.4) { $v$ \textcolor{labelgreen}{(0)}};
    
    \node at (1.25, 2.5) { \textcolor{labelgreen}{(-1)}};
    \node at (1.25, 1.5) { $\Ind(v)$};
    
    \node at (3.75, 2.5) { \textcolor{labelgreen}{(+1)}};
    \node at (3.75, 1.5) { $\Out(v)$};

    \draw (5,0) rectangle (10,4.8);

    \node[right] at (10, 2.4) { $R$};

    \draw (5, 1.9) -- (10, 1.9); 
    \draw (5, 2.9) -- (10, 2.9); 
    \draw (6.5, 1.9) -- (6.5, 2.9); 

    \node at (7.5, 3.85) { $X$ \hspace{0.8cm}\textcolor{labelgreen}{(0)}};
    \node at (5.75, 2.4) {\normalsize $s_0$ \textcolor{labelgreen}{(0)}};
    \node (Bbox) at (8.25, 2.4) { $B$\hspace{0.8cm} \textcolor{labelgreen}{(+1)}};
    \node at (7.5, 0.95) { $Y$ \textcolor{labelgreen}{(0)}};

    \draw[arrowcyan, -{Stealth[scale=1.5]}, ultra thick] 
        (3.7, 5.4) to[out=0, in=90] (7.75, 2.5);
\end{tikzpicture}

    Balancing $v$, no edges between $v$ and $S\setminus B$.
\end{center}


Next, we introduce one more vertex $u$ to the graph, which we will use to balance $T'$ and $B$ by introducing the opposite edges that $v$ had to them, i.e. 
 \[ \begin{cases} 
      u\to w & \text{ if } w\in \Ind_T(v), \\
      w\to u & \text{ if } w\in \Out_T(v)\cup B. 
   \end{cases}
\]

Note that then $\charg(u)=\abs{\Out_T(v)}+\abs{B}-\abs{\Ind_T(v)}=-\charg_T(v)+\abs{B}=-m+m=0$. Now every vertex is of neutral charge, while we still need to introduce arrows from $\{u,v\}$ to $X\cup Y\cup \{s_0\}$ and between $u$ and $v$ themselves.

\begin{center}

    \begin{tikzpicture}[scale = 0.75]

    \draw (0,0) rectangle (5,6);
    \draw (0,4.8) -- (5,4.8); 
    \draw (2.5,0) -- (2.5,4.8); 

    \node at (2.5, 6.4) { $T$};
    \node (v_node) at (1.5, 5.4) { $v$ \textcolor{labelgreen}{(0)}};
    
    \node (inv) at (1.25, 2.5) { \textcolor{labelgreen}{(0)}};
    \node at (1.25, 1.5) { $\Ind(v)$};
    
    \node (outv) at (3.75, 2.5) { \textcolor{labelgreen}{(0)}};
    \node at (3.75, 1.5) { $\Out(v)$};

    \draw (5,0) rectangle (10,4.8);
    \node[right] at (10, 2.4) { $R$};

    \draw (5, 1.9) -- (10, 1.9); 
    \draw (5, 2.9) -- (10, 2.9); 
    \draw (6.5, 1.9) -- (6.5, 2.9); 

    \node at (7.5, 3.85) { \hspace{0.7cm} $X$ \hspace{1cm}\textcolor{labelgreen}{(0)}};
    \node at (5.75, 2.4) {\normalsize $s_0$ \textcolor{labelgreen}{(0)}};
    \node (B_label) at (8.25, 2.4) { $B$\hspace{0.8cm} \textcolor{labelgreen}{(0)}};
    \node at (7.5, 0.95) { $Y$ \textcolor{labelgreen}{(0)}};

    \node[circle, fill, inner sep=1.5pt, label={right: $u$ \textcolor{labelgreen}{(0)}}] (u_node) at (7.5, 5.4) {};

    
    \draw[arrowcyan, -{Stealth[scale=1.2]}, ultra thick] 
        (u_node.west) to[out=180, in=90] (1.25, 3.2);

    \draw[arrowcyan, -{Stealth[scale=1.2]}, ultra thick] 
        (3.75, 3.2) to[out=90, in=190] (u_node.west);

    \draw[arrowcyan, -{Stealth[scale=1.2]}, ultra thick] 
        (8.25, 2.5) to[out=90, in=270] (u_node.south);
\end{tikzpicture}
         
        $\charg(u)=-\charg(v)+\abs{B}=-m+m=0$\\
        no edges between $\{u,v\}$ and $S\setminus B$ and between $u$ and $v$.
\end{center}


 We complete the graph into a regular tournament, by completing $\{v,s_0,u\}$ into a directed triangle and for each $x\in X$ and $y\in Y$ we complete $\{v,y,u,x\}$ into a directed square by adding $v\to y, y\to u, u\to x$ and $x\to v$.



\begin{center}
\begin{tikzpicture}[scale = 0.9]
    \draw (0,0) rectangle (5,6);
    \draw (0,4.8) -- (5,4.8); 
    \draw (2.5,0) -- (2.5,4.8); 

    \node at (2.5, 6.4) { $T$};
    \node (v_node) at (1.5, 5.4) { $v$ \textcolor{labelgreen}{(0)}};
    
    \node (inv) at (1.25, 2.5) { \textcolor{labelgreen}{(0)}};
    \node at (1.25, 1.5) { $\Ind(v)$};
    
    \node (outv) at (3.75, 2.5) { \textcolor{labelgreen}{(0)}};
    \node at (3.75, 1.5) { $\Out(v)$};

    \draw (5,0) rectangle (10,4.8);
    \node[right] at (10, 2.4) { $R$};

    \draw (5, 1.9) -- (10, 1.9); 
    \draw (5, 2.9) -- (10, 2.9); 
    \draw (6.5, 1.9) -- (6.5, 2.9); 

    \node (X_label) at (8.5, 3.85) { $X$ \textcolor{labelgreen}{(0)}};
    \node (s0_label) at (5.75, 2.4) {\normalsize $s_0$ \textcolor{labelgreen}{(0)}};
    \node (B_label) at (8.25, 2.4) { $B$ \textcolor{labelgreen}{(0)}};
    \node (Y_label) at (6.5, 0.95) { $Y$ \textcolor{labelgreen}{(0)}};

    \node[circle, fill, inner sep=1.5pt, label={right: $u$ \textcolor{labelgreen}{(0)}}] (u_node) at (7.5, 5.4) {};

    \draw[arrowcyan, -{Stealth[scale=1.2]}, ultra thick] 
        (2.4, 5.25) to (5.75, 2.7);
    \draw[arrowcyan, -{Stealth[scale=1.2]}, ultra thick] 
        (5.9, 2.7) to (u_node.west);
    \draw[arrowcyan, -{Stealth[scale=1.2]}, ultra thick] 
        (7.4, 5.6) -- (2.4, 5.6);

    \draw[arrowblue, -{Stealth[scale=1.2]}, ultra thick] 
        (2.4, 5.1) to[out=-60, in=145] (6.4, 1.5);
    \draw[arrowblue, -{Stealth[scale=1.2]}, ultra thick] 
        (6.6, 1.5) to[out=80, in=-90] (u_node.south);
    \draw[arrowblue, -{Stealth[scale=1.2]}, ultra thick] 
        (7.3, 5.4) to (5.5, 4.3);
    \draw[arrowblue, -{Stealth[scale=1.2]}, ultra thick] 
        (5.5, 4.3) to (2.4, 5.4);
\end{tikzpicture}

Completing the tournament while keeping balance.
\end{center}

In the resulting tournament, now there is an arrow between any two vertices while the charge of any vertex is still neutral. Hence, we constructed a regular tournament containing $T$, as desired.
\end{proof}

\section{Distinguished Types of Regular Tournaments}

While the preceding theorem establishes that any tournament can be embedded into a regular one, the resulting structures are often purely existential and may lack internal symmetry. In the study of extremal problems and algebraic graph theory, one is frequently interested in tournaments that exhibit high degrees of structural regularity, such as rotational tournaments (or circulant tournaments). In such cases, the vertex set is identified with a cyclic group and the edge set is invariant under group rotation.

These structured tournaments are inherently regular, but they also satisfy deeper symmetries between the neighbourhoods of their vertices. Specifically, in many symmetric constructions, the in-neighborhood $\Ind(v)$ and out-neighborhood $\Out(v)$ are not merely equal in size, but are isomorphic or anti-isomorphic. To explore the limits of cofinality, we move beyond general regularity and ask whether every tournament can be found as a subtournament of these highly structured classes.

\begin{definition}
The \textit{inversion} of a tournament $T = (V, E)$ is the tournament $T^*$ obtained from $T$ by reversing all its arrows, that is, $T^*=(V,E^*)$ where 
\[ E^* = \{ (v,u) \mid (u,v) \in E \}. \]
\end{definition}

\begin{definition} 
Let $T=(V,E)$ be a regular tournament. We say $T$ is of \textbf{Type-I} if there exists a vertex $v \in V$ such that $\Ind(v)$ is isomorphic to $\Out(v)$. \\
On the other hand, we say $T$ is of \textbf{Type-II} if there exists a vertex $v \in V$ such that $\Ind(v)$ is isomorphic to the inversion of $\Out(v)$.
\end{definition}

Type-I tournaments represent a ``local symmetry" where the past and future of a vertex are structurally identical, while Type-II tournaments reflect a duality often found in Paley tournaments and quadratic residue constructions. We first show that Type-I regular tournaments are cofinal in the class of finite tournaments, demonstrating that the requirement of local isomorphism does not restrict the embedding power of the class.

We first show that Type-I regular tournaments are cofinal in the class of finite tournaments. 

\begin{lemma}
    Every tournament is a subtournament of a Type-I regular tournament. 
\end{lemma}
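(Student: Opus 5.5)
We aim for a construction in which a new vertex $v$ has both $\Ind(v)$ and $\Out(v)$ equal to copies of one regular tournament. Given a tournament $T$, the previous theorem gives a regular tournament $R$ containing $T$, say of odd order $n=2k+1$. We take two disjoint copies $R_1$ and $R_2$ of $R$, add a vertex $v$, and set $w\to v$ for all $w\in R_1$ and $v\to w$ for all $w\in R_2$. Then $\Ind(v)=R_1\cong R\cong R_2=\Out(v)$, and $T$ embeds into $R_1$. It remains to orient the edges between $R_1$ and $R_2$ so that the whole tournament on $2n+1$ vertices is regular.

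We first do the charge bookkeeping. Inside $R_1$ and inside $R_2$ every vertex has charge $0$. Each $w\in R_1$ has the arrow $w\to v$, which contributes charge $-1$. Each $w\in R_2$ has the arrow $v\to w$, which contributes charge $+1$. The vertex $v$ itself has $n$ in-arrows and $n$ out-arrows, so it is already neutral. The bipartite edges between $R_1$ and $R_2$ therefore have to contribute net charge $+1$ to every vertex of $R_1$ and net charge $-1$ to every vertex of $R_2$. Concretely, each $w\in R_1$ needs $k+1$ in-arrows from $R_2$ and $k$ out-arrows to $R_2$. Symmetrically, each $w'\in R_2$ needs $k+1$ out-arrows to $R_1$ and $k$ in-arrows from $R_1$.

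This reduces the problem to finding a bipartite orientation between two $n$-sets in which the arrows from $R_2$ to $R_1$ form a $(k+1)$-regular bipartite graph. A circulant works: index $R_1=\{x_0,\dots,x_{n-1}\}$ and $R_2=\{y_0,\dots,y_{n-1}\}$ by $\mathbb{Z}_n$, and put $y_j\to x_i$ exactly when $i-j\in\{0,1,\dots,k\}$, and $x_i\to y_j$ otherwise. Each $x_i$ then receives exactly $k+1$ arrows from $R_2$ and sends $n-(k+1)=k$ arrows to $R_2$. Each $y_j$ sends $k+1$ arrows to $R_1$ and receives $k$. With this orientation every vertex has indegree and outdegree $n$, so the tournament is regular of order $2n+1$ and is of Type-I, witnessed by $v$.

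I expect no serious obstacle. The only delicate point is the parity and charge accounting for the bipartite part, and the circulant resolves it directly. Alternatively, one could obtain that bipartite part by decomposing $K_{n,n}$ into perfect matchings and orienting $k+1$ of them from $R_2$ to $R_1$.
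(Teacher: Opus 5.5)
Your proof is correct, but it takes a different route from the paper. You first use the preceding theorem to place $T$ inside a regular tournament $R$ of order $n=2k+1$, and only then double it. Since every vertex of $R_1$ and $R_2$ is already neutral, any $(k+1)$-regular bipartite orientation from $R_2$ to $R_1$ finishes the job, and your circulant supplies one; your degree counts for $x_i$, $y_j$ and $v$ all check out. The paper instead doubles $T$ itself, with no regularity assumption. It takes a copy $U=\{u_1,\dots,u_n\}$ of $T=\{v_1,\dots,v_n\}$ and a new vertex $w$ with $V\to w\to U$. It adds the diagonal arrows $u_i\to v_i$, and for $i\neq j$ it puts $u_j\to v_i$ and $v_j\to u_i$ whenever $v_i\to v_j$ in $T$. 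Because these cross arrows are built from $T$'s own arrows, $v_i$ receives exactly $\out_T(v_i)$ arrows from $U\setminus\{u_i\}$ and sends exactly $\ind_T(v_i)$ arrows to it, which cancels its internal imbalance. The two remaining arrows $u_i\to v_i$ and $v_i\to w$ cancel each other, and symmetrically for $u_i$ via $w\to u_i\to v_i$. This direct construction has several advantages. It is self-contained and produces a regular tournament of order exactly $2|T|+1$, rather than $2|R|+1$ with $|R|$ generally larger than $|T|$. It realizes $\Ind(w)\cong\Out(w)\cong T$ exactly. And since Type-I tournaments are regular, it gives an independent and much shorter proof of the main cofinality theorem, so your reliance on that theorem reverses an unnecessary dependency. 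When $T$ happens to be regular, the paper's cross edges themselves form a $(k+1)$-regular bipartite orientation, so your doubling step is essentially a special case of that gadget, with a circulant choice for the bipartite part. What your approach buys is modularity, since the gadget between the two copies does not depend on the internal structure. The price is depending on the earlier, more involved theorem and ending up with a larger tournament.
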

The idea here is very simple. Note that the term \textit{tournament} formalizes the idea of an actual sports tournament, where each player competes with every other one and edges indicate winning and loosing. In a regular tournament thus, each player wins as often as they loose. Given an arbitrary tournament outcome, if we could repeat the entire event, now ensuring each lost match is a win and vice versa, we would balance the score of every participant. Mathematically, this amounts to copying the vertex set and entering the described arrows. As now each player has to compete with their own copy as well, we also add a balancing vertex, which can be used to complete the two copies into a directed triangle. 

\begin{proof}
    Let $T=(V,E)$ be a tournament of order $n$ with $V=\{v_1,v_2,\ldots, v_n\}$. Let $S=(U,E')$ be a disjoint isomorphic copy of $T$ where $U=\{u_1,u_2,\ldots,u_n\}$. We construct a tournament $R$ of order $2n+1$ whose vertex set is $V\cup U \cup\{b\}$ where $b$ is a new vertex, and whose edge set is 
    $$E\cup E' \cup \{(u_j,v_i), (v_j,u_i)\mid (v_i,v_j)\in E\} $$
    $$\cup \{(v_i,w)\mid 1\leq i\leq n\}\cup\, \{(w,u_i)\mid 1\leq i\leq n\}\cup \{(u_i,v_i)\mid 1\leq i\leq n\} .$$

First note that indeed between any two vertices now there is a unique edge. This clearly holds if both vertices are in $V$, or both are in $U$ or one of them is $w$. For the leftover case, we consider $u_i\in U$ and $v_j\in V$ arbitrary. If $i=j$, then $(v_i,v_j)$ is not an edge in the original tournament, whence $u_i\to v_i$ is the unique edge between them. Otherwise, $i\neq j$, and either $(v_i,v_j)\in E$, whence in $R$ we have $v_j \to u_i$, or $(v_j,v_i)\in E$, whence in $R$ we add $u_i\to v_j$, i.e. if the $i$-th vertex "lost" in the original tournament, then now the $i$-th vertex "wins" and vice versa. This shows that there is exactly one directed edge between any two vertices in $R$. 

We now compute the indegrees in $R$. Clearly, $\ind(w)=n$ since $x\to w$ if and only if $x\in V$. Consider thus $1\leq i \leq n$ arbitrary and compute $\ind(v_i)$. .
\begin{eqnarray*}
\ind(v_i)&=&1+\ind_T(v_i)+|\{u_j\mid (u_j,v_i)\in E_R\}|\\
&=&1+\ind_T(v_i)+|\{v_j\mid (v_i,v_j)\in E\}|\\
&=&1+\ind_T(v_i)+\out_T(v_i)\\
&=&1+(n-1)=n.
\end{eqnarray*}
Analogously, we get $\ind(u_i)= n$, whence all vertices in $R$ have indegree equal to $n$, showing that $R$ is regular. Finally, observe that $\Ind(w)=V$ and $\Out(w)=U$, whence $\Out(w)\cong \Ind(w)$, since $U$ was chosen to be an isomorphic copy of $V$. Thus, $R$ is a regular tournament of Type-I containing $T$ as a subtournament.
\end{proof}

Next, we aim to show that Type-II regular tournaments are also cofinal. Towards this end, we will utilize the Gale-Ryser Theorem of matrices with integer coefficients.

\begin{definition}
Let $T=(V,E)$ be a tournament of order $n$ with $V=\{v_1,v_2,\ldots, v_n\}$. The \textit{adjacency matrix }of $T$ is the $n\times n$ matrix $A=[a_{ij}]$ defined as follows:
    \[
    a_{ij} = \begin{cases}
    1 & \text{if } v_i \to v_j,\\
    0 & \text{if } v_j \to v_i,\\
    0 & \text{if } i=j.
    \end{cases}
    \]
We call a square matrix a \textit{tournament matrix} if it is the adjacency matrix of some tournament.
\end{definition}

Clearly, $\out(v_i)$ is the sum of the $i$th row entries in $A$ and $\ind(v_j)$ is the sum of the $j$th column entries in $A$. Also observe that if $A$ is the adjacency matrix of a tournament $T$, then its transpose matrix, $A^\mathsf{T}$, is the adjacency matrix of the inversion tournament $T^*$. Moreover, an $n\times n$ matrix $A$ with entries from $\{0,1\}$ is a tournament matrix if and only if
    \[A + A^\mathsf{T} = J_n - I_n,\]
where, $I_n$ is the $n\times n$ identity matrix, and $J_n$ is the $n\times n$ matrix where every entry is $1$.

Suppose we are given a tournament $V$ of order $n$ with $V=\{v_1,v_2,\ldots, v_n\}$ and adjacency matrix $A$. We aim to construct a Type-II regular tournament $W$ of order $2n+1$ that contains $V$ as a subtournament. Let $U$ be a tournament with vertex set $U=\{u_1,u_2,\ldots,u_n\}$ such that $U\cong V^*$ and $V\cap U=\emptyset$. The adjacency matrix of $U$ is $A^\mathsf{T}$. 

The set the vertex set of $W$ to be $$W=\{v_1,v_2,\ldots, v_n,u_1,u_2,\ldots,u_n,w\}$$ where $w$ is a new vertex. Furthermore, we declare $w\to v_i$ and $u_i\to w$ for every $i\in\{1,2,\ldots, n\}$. Let $B$ be the adjacency matrix of $W$. The only missing data about $B$ is the entries between vertices of $V$ and vertices of $U$, that is, the submatrix $C$ below.

\[
B= \quad
\begin{NiceArray}{cccc|cccc|c}[first-row, first-col]
         & v_1 & v_2 & \cdots & v_n & u_1 & u_2 & \cdots & u_n & w \\
  v_1    & \Block{4-4}{\Huge A} &&&& \Block{4-4}{C} &&&& 0 \\ 
  v_2    & &&&&  &  &  &  & 0 \\
  \vdots & &&&&  &  &  &  & \vdots \\
  v_n    & &&&&  &  &  &  & 0 \\ \hline
  u_1    & \Block{4-4}{(J_n-C)^\mathsf{T}} &&&& \Block{4-4}{A^\mathsf{T}} &&&& 1 \\
  u_2    & &&&&  &  &  &  & 1 \\
  \vdots & &&&&  &  &  &  & \vdots  \\
  u_n    & &&&&  &  &  &  & 1 \\ \hline
  w      & 1 & 1 & \cdots & 1 & 0 & 0 & \cdots & 0 & 0 \\
  \CodeAfter
  \SubMatrix[{1-1}{9-9}][xshift=3pt]
\end{NiceArray}
\]

The problem now boils down to finding a submatrix $C$ that will turn $B$ into the adjacency matrix of a regular tournament. By design, such a regular tournament will be of Type-II since $\Ind(w)=U$ and $\Out(w)=V$, and $U\cong V^*$. The submatrix $C$ will only contain 0s and 1s, and we will set its mirror image about the main diagonal to be $(J_n-C)^\mathsf{T}$. This will ensure that $B$ is a tournament matrix. To ensure regularity, we need $\out(x)=n$ for all $x \in W$, i.e. the sum of all entries in any row of $B$ is $n$.

We now examine the entries of $C$. For $v_i \in V$ we want the sum of the entries in row $i$ of $B$ to equal $n$. This will happen if the sum of the $i$th row in $C$ is equal to $n-\out(v_i)$. For $u_j \in U$ we want the sum of the $(n+j)$th row of $B$ to equal $n$. This will happen if the sum of the $j$th row in $(J_n-C)^\mathsf{T}$ is equal to $n-1-\out(u_j)$, expanding this we get the following.

$$\sum_{i=1}^n \left((J_n-C)^\mathsf{T}\right)_{ji}=\sum_{i=1}^n (J_n-C)_{ij}=\sum_{i=1}^n 1-C_{ij}=n-\sum_{i=1}^n C_{ij}.$$

And so
\begin{equation*}
\renewcommand{\arraystretch}{1.5} 
\begin{array}{cccc} 
    & \sum_{i=1}^n \left((J_n-C)^\mathsf{T}\right)_{ji} & = & n-1-\out(u_j) \\  
    \text{iff} & n-\sum_{i=1}^n C_{ij} & = & n-1-\out(u_j) \\
    \text{iff} & \sum_{i=1}^n C_{ij} & = & 1+\out(u_j) \\
    \text{iff} & \sum_{i=1}^n C_{ij} & = & 1+\ind(v_j) \\
    \text{iff} & \sum_{i=1}^n C_{ij} & = & n-\out(v_j)
\end{array}
\end{equation*}

The last statement means that the sum of the entries of the $j$th column of $C$ is equal to $n-\out(v_j)$.
We thus want a matrix $C$ whose $i$th row sum is $n-\out(v_i)$ and whose $i$th column sum is also $n-\out(v_i)$. A characterization of such matrices is given by the Gale-Ryser Theorem.

For a given matrix $A=[a_{ij}]$, we define the \textit{row sum vector} to be the vector $(s_1,\dots, s_m)$, where $s_i:=\sum_{j=1}^na_{ij}$ and similarly for the \textit{column sum vector}.

\begin{theorem}[Gale-Ryser \cite{gale, ryser}]\label{gale-ryser}
Let $\lambda$ be a positive integer. Let $P = (p_1, p_2, \ldots , p_m)$ and $Q = (q_1, q_2 ,\ldots, q_n )$ be sequences of nonnegative integers with $q_1\geq q_2 \geq \cdots \geq q_n$ and where $p_1 + p_2 + \cdots + p_m = q_1 + q_2 + \cdots + q_n$.  There exists an $m$ by $n$ matrix $A = [a_{ij}]$ with integer entries $0\leq a_{ij} \le \lambda$ whose row sum vector is $P$ and column sum vector is $Q$ if and only if
   $$ \sum_{i=1}^k q_i \le \sum_{i=1}^{m} \min(p_i, \lambda k) \quad \text{for all } 1 \le k \le n.$$
\end{theorem}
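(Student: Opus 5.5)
The statement is the capacitated form of the Gale--Ryser theorem. I would prove necessity by a direct counting argument and sufficiency by the max-flow min-cut theorem together with the integrality theorem for network flows.

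\emph{Necessity.} Suppose $A=[a_{ij}]$ exists, and fix $1\le k\le n$. The sum $\sum_{j=1}^k q_j$ counts the total weight in the first $k$ columns, which is $\sum_{i=1}^m\sum_{j=1}^k a_{ij}$. For each row $i$, the inner sum $\sum_{j=1}^k a_{ij}$ is at most $p_i$, since it is part of the row sum. It is also at most $\lambda k$, since each of its $k$ entries is at most $\lambda$. Hence it is at most $\min(p_i,\lambda k)$, and summing over $i$ gives the inequality.

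\emph{Sufficiency.} I would build a flow network with a source $s$, a sink $t$, one node $r_i$ for each row and one node $c_j$ for each column. The arcs are $s\to r_i$ with capacity $p_i$, $r_i\to c_j$ with capacity $\lambda$, and $c_j\to t$ with capacity $q_j$. An integral flow of value $N:=\sum_j q_j=\sum_i p_i$ saturates every arc out of $s$ and into $t$. Setting $a_{ij}$ to be the flow on $r_i\to c_j$ then gives the required matrix. By max-flow min-cut and the integrality theorem, it therefore suffices to show that every $s$--$t$ cut has capacity at least $N$.

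\emph{The cut computation.} This is the main step. Let a cut have source side $\{s\}\cup R\cup\{c_j: j\notin K\}$, where $R$ is a set of rows and $K$ is the set of columns lying on the sink side, with $|K|=k$. Its capacity is
\[
\sum_{i\notin R}p_i+\sum_{i\in R}\lambda k+\sum_{j\notin K}q_j .
\]
For fixed $K$ this is minimised by putting row $i$ into $R$ exactly when $\lambda k<p_i$, which gives $\sum_{i=1}^m\min(p_i,\lambda k)+\sum_{j\notin K}q_j$. So every cut has capacity at least $N$ if and only if, for every $K$,
\[
\sum_{j\in K}q_j\le\sum_{i=1}^m\min(p_i,\lambda k).
\]
The right-hand side depends only on $k=|K|$. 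The left-hand side is largest when $K$ consists of the $k$ largest entries, namely $q_1,\dots,q_k$, because $Q$ is nonincreasing. Thus the hypothesis covers every $K$. The case $k=0$ is trivial, since the left side is then $0$.

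Hence the maximum flow equals $N$. An integral maximum flow exists, and it yields the desired matrix with entries in $\{0,\dots,\lambda\}$.
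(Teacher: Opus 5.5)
Your proof is correct. The necessity count is right, and your parametrisation $\{s\}\cup R\cup\{c_j: j\notin K\}$ covers every $s$--$t$ cut. Minimising row by row over $R$ gives exactly $\sum_i\min(p_i,\lambda k)$, and the monotonicity of $Q$ is used precisely to reduce arbitrary column sets $K$ to initial segments. Integrality of the maximum flow then gives entries in $\{0,\dots,\lambda\}$.

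The paper itself does not prove this statement. It quotes it from Gale and Ryser and uses it as a black box with $\lambda=1$ and $P=Q=(r_1,\dots,r_n)$. Your argument is essentially Gale's original route, since he obtained the criterion from a supply--demand theorem for network flows. Its advantage is that the capacity $\lambda$ costs nothing and the inequality appears naturally as a min-cut condition.

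The other standard route, for $0$--$1$ matrices and in the spirit of Ryser's interchange method, is constructive. If some matrix with the prescribed line sums exists, then for any row $i$ there is one whose $p_i$ ones sit in the $p_i$ columns of largest column sum. This is because a $2\times 2$ interchange with another row can move a $1$ of row $i$ from a column of smaller sum into one of larger sum without changing any line sum. Iterating on residual column sums fills the matrix row by row.

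That route buys something concrete for this paper. In the greedy algorithm of Section~5, sorting $U$ by current in-degree is the same as sorting the columns of $C$ by residual demand $n-\ind(u_j)$. So the exchange argument, combined with the existence of $C$ proved in Section~4, would show the greedy procedure never gets stuck. That would turn the conjecture stated there into a theorem.
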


Let $S=(s_1,\ldots,s_n)$ be the score sequence of a tournament $V$ on $n$ vertices. We define a new sequence $(r_i)$ given by $r_i=n-s_i$. In our context, we want to apply the Gale-Ryser Theorem with $\lambda=1$ and $P=Q=(r_1,r_2,\ldots,r_n).$ This sequence is clearly nonincreasing, as required in the Gale-Ryser Theorem. Hence a matrix $C$ whose row sum vector and column sum vector are equal to the above sequence $(r_i)$ exists if and only if

\begin{equation}\label{GaleRyserCriterion}\tag{$\dagger$}
\sum_{i=1}^k r_i \le \sum_{i=1}^{n} \min(r_i, k) \quad  \text{for all } 1 \le k \le n.
\end{equation}

Thus, we need to check for any tournament, that its sequence $(r_1,\ldots,r_n)$  satisfies Condition (\ref{GaleRyserCriterion}). Since $S$ is the score sequence of a tournament, it satisfies the Landau Condition (see \cite{landau}), that is, $\sum\limits_{i=1}^k s_i \ge \binom{k}{2}$ for every $1\leq k\leq n$. We now derive its equivalent condition on the $(r_i)$ sequence. For $1\leq k\leq n$, Landau implies that 
\begin{equation*}
    nk-\binom{k}{2} \ge nk-\sum_{i=1}^k s_i  = \sum_{i=1}^k (n-s_i) = \sum_{i=1}^k r_i. 
\end{equation*}
Therefore, the sequence $(r_i)$ satisfies
\begin{equation} \label{landau}\tag{$\star$}
    \sum_{i=1}^k r_i \le nk-\binom{k}{2}  \text{ for all } 1\leq k\leq n. 
\end{equation}

Consequently, for the sequence $(r_i)$ to satisfy Condition (\ref{GaleRyserCriterion}) it suffices to show that 
\begin{equation} \label{easier-condition}\tag{$\ddagger$}
    nk-\binom{k}{2}\leq \sum_{i=1}^n \min(r_i,k)  \text{ for all } 1\leq k\leq n. 
\end{equation}

To establish this inequality, we use the induction technique used in Griggs-Reid \cite{griggs}, which we will outline briefly below. But first we need to define an operation on score sequences.

\begin{definition}\label{defi:Griggs-ReidInduction}
Let $S:=(s_1,\ldots,s_n)$ be the score sequence of a nontransitive tournament on $n$ vertices. Let $t$ be the \textbf{smallest} index for which $s_t = s_{t+1}$ and define $m$ to be the number of occurrences of the integer $s_t$ in $S$. For $1 \le i \le n$, define a new sequence $S'=(s'_i)$ as follows:
\[
s'_i := \begin{cases}
    s_i-1 & \text{if } i=t; \\
    s_i+1 & \text{if } i=t+m-1; \\
    s_i & \text{otherwise.}\\
\end{cases}
\]

\end{definition}

We now describe the induction technique on score sequences, ref. \cite{griggs}. The base case of the induction is provided by the transitive tournament which has the score sequence $(0,1,\dots , n-1)$.  In the induction step, if $S$ is the score sequence of a nontransitive tournament $T$, we proceed by applying the operation above on $S$ which destroys blocks of identical entries, ensuring that eventually this operation leads to the transitive tournament. In \cite{griggs} it is shown that $S$ is a score sequence if and only if $S'$ is. This yields that a repeated reiteration of this process on any given score sequence must eventually lead to the transitive tournament, as the operation stops only when there are no repeated entries in the sequence and there is a unique score sequence with this property, namely, the one of the transitive tournament.   

Using the above, we now proceed with proving the main claim. We aim to show that a sequence satisfies the desired condition if and only if its primed sequence does. After first ensuring that the transitive tournament satisfies it, this yields that any score sequence does. By slight abuse of notation, when we say a score sequence $(s_i)$ satisfies Condition (\ref{easier-condition}), we mean its corresponding $(r_i)$ sequence satisfies Condition (\ref{easier-condition}).

\begin{theorem}
Let $(s_1,\ldots,s_n)$ be the score sequence of a tournament $V$ on $n$ vertices, and let $r_i=n-s_i$. Then the sequence $(r_i)$ satisfies Condition (\ref{easier-condition}).
\end{theorem}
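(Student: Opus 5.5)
Rather than running the Griggs--Reid induction of Definition~\ref{defi:Griggs-ReidInduction}, I would prove Condition~(\ref{easier-condition}) directly by a counting argument inside the tournament $V$. The first step is to rewrite the right-hand side of (\ref{easier-condition}). Since $\min(r_i,k)=k-\max(0,k-r_i)$ and $k-r_i=s_i-(n-k)$, we get
\[
\sum_{i=1}^n \min(r_i,k) \;=\; nk-\sum_{i=1}^n \max\bigl(0,\,s_i-(n-k)\bigr).
\]
So (\ref{easier-condition}) is equivalent to showing, with $j:=n-k$, that
\[
\sum_{i=1}^n \max(0,\,s_i-j)\;\le\;\binom{k}{2}\qquad\text{for all } 1\le k\le n .
\]

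The second step is to note that the left-hand side is a maximum over vertex subsets:
\[
\sum_i \max(0,s_i-j)=\max_{A\subseteq V}\sum_{v_i\in A}(s_i-j).
\]
The maximum is attained at $A=\{v_i : s_i>j\}$. It therefore suffices to bound $\sum_{v_i\in A}(s_i-j)$ for an arbitrary set $A$ of size $a$.

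Each out-arrow of a vertex in $A$ either stays inside $A$ or leaves $A$. The arrows inside $A$ contribute exactly $\binom a2$ in total, and each vertex of $A$ has at most $n-a$ out-arrows leaving $A$. (This is the same counting that underlies Landau's condition.) Hence
\[
\sum_{v_i\in A}(s_i-j)\le \binom a2+a(n-a)-aj=\binom a2+a(k-a).
\]

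The last step is the algebraic identity
\[
\binom{k}{2}-\binom a2-a(k-a)=\binom{k-a}{2}=\frac{(k-a)(k-a-1)}{2}.
\]
This quantity is nonnegative for every integer $k-a$, since $x(x-1)\ge 0$ for all integers $x$. So the bound holds whether $a\le k$ or $a>k$, and this gives the claim for every $k$. The degenerate case $A=\emptyset$ gives a sum of $0$ and is trivial.

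The only point that needs care is the case $a>k$. There the naive estimate might seem to overshoot, but the identity handles it uniformly, because $\binom{x}{2}\ge 0$ also for $x\le 0$ when it is read as $x(x-1)/2$.

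If one prefers to stay with the paper's announced strategy, the fallback is as follows. First check (\ref{easier-condition}) for the transitive score sequence, where $r_i=n-i+1$. Then show that the move $S\mapsto S'$ of Definition~\ref{defi:Griggs-ReidInduction} changes $\sum_i\min(r_i,k)$ by $0$ or by $-1$ only in a way that is compensated. Tracking the two modified entries across the threshold $k$ is the fiddly part of that route, which is why I would use the direct argument above.
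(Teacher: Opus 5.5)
Your proof is correct, and it takes a genuinely different route from the paper's. The paper runs the Griggs--Reid induction on score sequences. It first verifies (\ref{easier-condition}) for the transitive sequence $r=(n,n-1,\ldots,1)$, where it in fact holds with equality for every $k$. It then notes that the move $S\mapsto S'$ leaves $\sum_i\min(r_i,k)$ unchanged except at $k=r_t$, where it drops by exactly $1$. It uses Landau's condition (\ref{landau}) to show the inequality is strict at $k=r_t$ (Claim (A)). Finally, it invokes the Griggs--Reid theorem that $S'$ is again a score sequence, together with the order $\prec$, to walk every score sequence to the transitive one. You instead rewrite (\ref{easier-condition}) as $\sum_i\max(0,s_i-(n-k))\le\binom{k}{2}$ and express the left side as a maximum over vertex sets $A$. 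You bound each $\sum_{v\in A}s_v$ by $\binom{a}{2}+a(n-a)$ by counting out-arrows, which is just Landau's condition applied to the complement of $A$. You finish with $\binom{k}{2}=\binom{a}{2}+a(k-a)+\binom{k-a}{2}$, using that $x(x-1)/2\ge 0$ for every integer $x$, so the case $a>k$ needs no separate treatment. Your argument is self-contained and much shorter: it needs neither the Griggs--Reid theorem nor the ordering and termination argument. It also makes the equality cases visible: equality at $k$ forces the vertices of score above $n-k$ to number $k$ or $k-1$ and to beat every other vertex. The paper's route, in exchange, shows the transitive tournament to be the extremal case, since each Griggs--Reid move towards it can only lower $\sum_i\min(r_i,k)$. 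Your fallback is also simpler than you expect. Propagating back from $T_n$ only uses $\sum_i\min(r'_i,k)\le\sum_i\min(r_i,k)$, so nothing has to be compensated. The paper's Claim (A) serves only the implication from $S$ to $S'$, which its final step never uses.
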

\begin{proof} The base case of the Griggs-Reid induction consists of validating Condition (\ref{easier-condition}) for the transitive tournament of order $n$ with score sequence $(s_1, \ldots, s_n)=(0, 1, \ldots, n-1)$. Thus, for $r_i=n-s_i=n+1-i$, we obtain
$$(r_1, r_2, \cdots, r_n)=(n, n-1, \ldots, 3,2,1).$$
We proceed by induction on $k$. For the base case, we have 
$$\sum\limits_{i=1}^{n} \min(r_i, 1) = \sum\limits_{i=1}^{n} 1 = n \ge n\cdot 1-\binom{1}{2}=n.$$
Next, fix some $1 \leq k < n$ and  assume that
\begin{equation*}
    nk-\binom{k}{2} \le \sum_{i=1}^{n} \min(r_i, k). \tag{IH}
\end{equation*}
First, recall that $\binom{k+1}{2}=\binom{k}{2}+k$, whence $n(k+1)-\binom{k+1}{2} = (n-k)+nk-\binom{k}{2}$.  Next, observe that for all $r_i\leq k$ we have $\min(r_i,k)=\min(r_i,k+1)$, and for all $r_i> k$ we have $\min(r_i,k+1)=k+1=\min(r_i,k)+1$. Moreover, the second case occurs exactly in the first $n-k$ terms, namely, $r_1, r_2, \ldots, r_{n-k}$. Therefore, 
\begin{align*}
\sum_{i=1}^{n} \min(r_i, k+1)&= \sum_{i=1}^{n-k} \min(r_i, k+1) + \sum_{i=n-k+1}^{n} \min(r_i, k+1)  \\
&= \sum_{i=1}^{n-k} (\min(r_i, k)+1) + \sum_{i=n-k+1}^{n} \min(r_i, k)\\
&=(n-k)+\sum_{i=1}^{n} \min(r_i, k)\\
   &\overset{\text{(IH)}}{\geq} (n-k)+ nk-\binom{k}{2}\\
   &= n(k+1)-\binom{k+1}{2}.  
\end{align*}
This shows that transitive tournaments satisfy Condition (\ref{easier-condition}). 

Next, let $S=(s_1, s_2, \cdots, s_n)$ be the score sequence of a nontransitive tournament $T$, and as before, put $r_i=n-s_i$ and let $t$ be the \textbf{smallest} index for which $s_t = s_{t+1}$. Further, define $m$ to be the number of occurrences of the integer $s_t$ in $S$ and for $1 \le i \le n$. Following Griggs-Reid, we define $S'=(s'_i)$ via:
\[
s'_i := \begin{cases}
    s_i-1 & \text{if } i=t; \\
    s_i+1 & \text{if } i=t+m-1; \\
    s_i & \text{otherwise.}\\

\end{cases}
\]
Put $r'_i=n-s'_i$. 

We next prove an intermediary claim adding a stronger bound on Condition (\ref{easier-condition}) when $k=r_t$ (where $t$ is the smallest index for which $s_t = s_{t+1}$).

\textbf{Claim (A).} If $S$ satisfies Condition (\ref{easier-condition}), then
    $nr_t-\binom{r_t}{2} < \sum\limits_{i=1}^n \min(r_i,r_t).$
    
\textit{Proof of claim.}
    Since  $S$ satisfies Condition (\ref{easier-condition}) we get $\sum\limits_{i=1}^n \min(r_i,r_t) \geq nr_t-\binom{r_t}{2}$. Assume for the sake of contradiction, that 
    $\sum\limits_{i=1}^n \min(r_i,r_t) = nr_t-\binom{r_t}{2}.$ Note that $r_t\less n$, so $r_t+1\leq n$, whence we may use Condition (\ref{easier-condition}) on $r_t+1$ and get
    \[\sum_{i=1}^n \min(r_i,r_t+1) \ge n(r_t+1)-\binom{r_t+1}{2}.\]   
    Thus, $\sum\limits_{i=1}^n \min(r_i,r_t+1) - \sum\limits_{i=1}^n \min(r_i,r_t) \ge n-r_t,$ 
    and therefore,
    $|\{i \mid r_i > r_t\}|  \ge n-r_t.$  By choice of $t$, we have $r_1>r_2>\cdots>r_{t-1}>r_t$, and thus $|\{i \mid r_i > r_t\}| = t-1$. Therefore,  $r_t \ge n+1-t.$ From this and the fact $r_1>\cdots>r_{t-1}>r_t$, we get $r_i \ge n+1-i \text{ for } 1\leq i \le t.$
    
    Summing $(r_i)$ up to $t$, we get
    \[ \sum_{i=1}^t r_i \ge \sum_{i=1}^t (n+1-i) \ge nt - \binom{t}{2}.\]
    However, we know that $r_{t+1}=r_t$, and therefore,
    \[\sum_{i=1}^{t+1} r_i=\sum_{i=1}^{t} r_i +r_{t+1} \ge nt - \binom{t}{2}+n+1-t = n(t+1)-\binom{t+1}{2}+1>n(t+1)-\binom{t+1}{2}\]
    which contradicts Condition (\ref{landau}) for $k=t+1$. This is the end of the proof of Claim (A).   

We now prove the following.

\textbf{Claim (B).} The score sequence $S$ satisfies (\ref{easier-condition}) if and only if $S'$ satisfies (\ref{easier-condition}).

\textit{Proof of claim.} First, observe that by choice of $t$ we have the following.
 \[
\sum_{i=1}^n \min(r_i', k) = \begin{cases}
    \sum\limits_{i=1}^n \min(r_i, k) & \text{if } k\neq r_t; \\
    \sum\limits_{i=1}^n \min(r_i, k)-1 & \text{if } k=r_t.
\end{cases}
\]
For the reverse direction, assume $S'$ satisfies (\ref{easier-condition}) and note that in both cases above we have $ \sum\limits_{i=1}^n \min(r_i', k) \le  \sum\limits_{i=1}^n \min(r_i, k)$. We thus get that $S$ satisfies (\ref{easier-condition}). We now consider the forward direction. Assume $S$ satisfies (\ref{easier-condition}). When $k\neq r_t$, the sums remain the same and so (\ref{easier-condition}) is satisfied for $k\neq r_t$. When $k=r_t$, we have $\sum\limits_{i=1}^n \min(r_i', r_t)=\sum\limits_{i=1}^n \min(r_i, r_t)-1$, and by Claim (A), we get  $nr_t - \binom{r_t}{2} \le \sum\limits_{i=1}^n \min(r_i, r_t)-1$, and so $nr_t - \binom{r_t}{2} \le \sum\limits_{i=1}^n \min(r_i', r_t)$. Thus, $S'$ satisfies (\ref{easier-condition}). This is the end of the proof of Claim (B).   

We define a linear order $\prec$ on score sequences as follows: $(a_1, a_2, \cdots ,a_n) \prec  (b_1, b_2, \cdots ,b_n)$ iff $a_i < b_i$ where $i$ is the largest index with $a_i\neq b_i$. Note that, for any nontransitive score sequence $S$, we have $S \prec T_n$ where $T_n$ is the score sequence for the transitive $n$-tournament. Moreover, by \cite{griggs}, if $S$ is a score sequence, then $S'$ is also a score sequence. And by construction, $S \prec S'$.  So, by repeated application of this operation starting from the original score sequence $S$, we eventually reach $T_n$. And since $T_n$ satisfies (\ref{easier-condition}), we get $S$ also satisfies (\ref{easier-condition}) by Claim (B). Therefore, all score sequences satisfy (\ref{easier-condition}).
\end{proof}

\begin{corollary}
    Every tournament is a subtournament of a Type-II regular tournament. 
\end{corollary}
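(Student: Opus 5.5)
The plan is to assemble the pieces already set up. Given a tournament $V$ of order $n$ with adjacency matrix $A$, order its vertices so that the score sequence $(s_1,\ldots,s_n)$ is nondecreasing, and put $r_i=n-s_i$. Then $(r_i)$ is nonincreasing and consists of nonnegative integers, since $s_i\le n-1$ gives $r_i\ge 1$. Also $\sum r_i=\sum r_i$ trivially, so the hypotheses of Theorem~\ref{gale-ryser} hold with $\lambda=1$ and $P=Q=(r_1,\ldots,r_n)$. By the Landau-derived bound (\ref{landau}) together with the preceding theorem, which gives (\ref{easier-condition}), we have for every $1\le k\le n$
\[
\sum_{i=1}^k r_i\le nk-\binom{k}{2}\le \sum_{i=1}^n\min(r_i,k).
\]
This is exactly (\ref{GaleRyserCriterion}). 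So Theorem~\ref{gale-ryser} yields an $n\times n$ $0/1$ matrix $C$ whose $i$th row sum and $i$th column sum both equal $n-\out(v_i)$.

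Next I plug this $C$ into the block matrix $B$ built earlier on $W=V\cup U\cup\{w\}$, where $U\cong V^*$ has adjacency matrix $A^\mathsf{T}$, and $w\to v_i$, $u_i\to w$. I will check that $B$ is a tournament matrix, i.e.\ $B+B^\mathsf{T}=J_{2n+1}-I_{2n+1}$, block by block:
\begin{itemize}
\item the $A$ and $A^\mathsf{T}$ diagonal blocks are tournament matrices;
\item the off-diagonal blocks $C$ and $(J_n-C)^\mathsf{T}$ sum with their transposes to $J_n$;
\item the $w$ row and column are complementary.
\end{itemize}
The diagonal of $C$ causes no trouble: $v_i$ and $u_i$ are distinct vertices, so any value of $C_{ii}$ just orients the edge $v_i u_i$.

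Then I verify regularity, i.e.\ every row sum of $B$ equals $n$:
\begin{itemize}
\item For the row of $v_i$: $\out_V(v_i)+(n-\out(v_i))+0=n$.
\item For the row of $u_j$: by the computation preceding (\ref{GaleRyserCriterion}), the row sum of $(J_n-C)^\mathsf{T}$ is $n$ minus the $j$th column sum of $C$, which equals $\out(v_j)$. Since $\out_U(u_j)=\ind_V(v_j)=n-1-\out(v_j)$, the row sum is $\out(v_j)+(n-1-\out(v_j))+1=n$.
\item For the row of $w$: the sum is $n$ directly.
\end{itemize}
So $W$ is a regular tournament of order $2n+1$ containing $V$ as the subtournament on $\{v_1,\ldots,v_n\}$.

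Finally, $\Out(w)=V$ and $\Ind(w)=U\cong V^*$, so $\Ind(w)$ is isomorphic to the inversion of $\Out(w)$, and $W$ is of Type-II. All the substantial work, namely verifying (\ref{easier-condition}) via the Griggs--Reid induction, was done in the previous theorem. The only point needing care here is bookkeeping: the ordering of vertices must match the nondecreasing score sequence so that Theorem~\ref{gale-ryser}'s monotonicity hypothesis applies, and the column-sum condition on $C$ must translate correctly into the row sums of the $U$ block.
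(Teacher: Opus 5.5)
Your proposal is correct and follows essentially the same route as the paper. You combine (\ref{landau}) with the preceding theorem to obtain (\ref{GaleRyserCriterion}), apply Theorem~\ref{gale-ryser} with $\lambda=1$ and $P=Q=(r_1,\ldots,r_n)$ to get $C$, and use $C$ to orient the edges between $V$ and $U\cong V^*$ in $W=V\cup U\cup\{w\}$, so that $\Ind(w)=U$ is isomorphic to the inversion of $\Out(w)=V$. Your block-by-block check of $B+B^\mathsf{T}=J_{2n+1}-I_{2n+1}$ and of the row sums spells out what the paper's proof calls clear from its earlier computation.
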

\begin{proof}
    Take any tournament $V=\{v_1,v_2,\ldots, v_n\}$ with score sequence $S$. Let $U$ be a tournament with vertex set $U=\{u_1,u_2,\ldots,u_n\}$ such that $U\cong V^*$ and $V\cap U=\emptyset$. We construct a regular tournament $W$ with vertex set $W=\{v_1,v_2,\ldots, v_n,u_1,u_2,\ldots,u_n,w\}$ where $w$ is a new vertex. We take all the directed edges of $V$ and $U$ and declare $w\to v_i$ and $u_i\to w$ for every $i\in\{1,2,\ldots, n\}$. By the above, $S$ satisfies Condition (\ref{easier-condition}), and so it satisfies Condition (\ref{GaleRyserCriterion}) and hence, by Gale-Ryser Theorem, there exists a matrix $C=[c_{ij}]$ whose $i$th row sum is $n-\out(v_i)$ and whose $i$th column sum is also $n-\out(v_i)$. We use $C$ to define the arrows between $V$ and $U$ as follows: 
    \[ \begin{cases} 
      v_i\to u_j & \text{ if } c_{ij}=1, \\
      u_j\to v_i & \text{ if } c_{ij}=0. 
   \end{cases}
\]
    By the properties of matrix $C$, and since $\Out(w)=V$ and $\Ind(w)=U\cong V^*$, it is clear that $W$ is regular tournament of Type-II containing $V$ as a subtournament.
\end{proof}

\section{A Constructive Algorithm for Type-II}

In this section, we present a greedy algorithm that, given an arbitrary tournament $V$, we conjecture produces a Type-II regular tournament in which $V$ embeds. The process begins with $V$, and extends it by adding a disjoint copy of its inversion tournament $U=V^*$ along with one additional new vertex $w$. Arrows are then added from $w$ to every vertex in $V$, and from each vertex in $U$ to $w$. Afterwards, the algorithm iterates over the vertices of $V$; where for each vertex in $V$, it defines its arrows to the vertices in $U$ in a greedy manner. Below, we give a detailed description of the algorithm.

\begin{tcolorbox}[colback=white, colframe=black!60, title=Greedy Algorithm for Constructing Regular Tournaments of Type-II]
\begin{itemize}[itemsep=4pt]
    \item Input: A tournament $V$ with $n$ vertices.
    \item Label the vertices of $V$ as $v_1, v_2, \ldots, v_n$, where $\out(v_i) \leq \out(v_{i+1})$.
    \item  Construct a tournament $U = \{u_1, u_2, \ldots, u_n\}$ with  $U\cap V=\emptyset$, and an edge set
    $\{(u_i,u_j)\mid v_j\to v_i\}.$ That is, $U$ is an inversion of $V$.
  \item Let $W=V\cup U \cup \{w\}$, where $w$ is a new vertex. 
  \item Define arrows between $w$ and the vertices in $V\cup U$ as follows.\\ For every $1\leq i\leq n$, define $u_i\to w$ and $w\to v_i$. 
\item Set $r_i=n-\out(v_i)$ for $1\leq i\leq n$.
\item Loop over the vertices of $V$ as described below.

\begin{enumerate}[topsep=5pt, itemsep=5pt]
    \item Set $i=1$. 
    \item WHILE $i\neq n+1$
    \begin{itemize}
    \item [(i)] For $1\leq j \leq r_i$, define $v_i \to u_j$ and $\ind(u_j)\texttt{++}$ . 
    \item [(ii)] For $r_i+1\leq j \leq n$, define $u_j \rightarrow v_i$.
    \item [(iii)] RELABEL the vertices of $U$ such that $\ind(u_j) \leq \ind(u_{j+1})$. 
    \item [(iv)] $i\texttt{++}$.
        \end{itemize}
\end{enumerate}
\end{itemize}
\end{tcolorbox}

We conjecture that this algorithm always produces a Type-II regular tournament containing $V$ as a subtournament. This has been verified computationally for all tournaments of order up to 17 which correspond to roughly $2.45\times 10^{26}$ many score sequences. 

\bibliographystyle{plain}
\bibliography{references}
\end{document}